\def\={\;=\;}  \def\+{\,+\,} \def\m{\,-\,}
\theoremstyle{plain}
\newtheorem{theorem}{Theorem}
\newtheorem*{corollary*}{Corollary}
\newtheorem*{Example*}{Example}
\theoremstyle{definition}
\newtheorem*{def*}{Definition}
\newtheorem*{theorem*}{Theorem}
\newtheorem*{example}{Example}
\newtheorem*{definition*}{Definition}
\theoremstyle{remark}
\newtheorem*{remark}{Remark}
\newcommand{\TBD}{8}
\newcommand{\R}{\mathbb{R}}
\newcommand{\Q}{\mathbb{Q}}
\newcommand{\Z}{\mathbb{Z}}
\newcommand{\La}{\Lambda}
\newcommand{\la}{\lambda}
\newcommand{\tz}{\theta_0}
\newcommand{\e}{\varepsilon}
\newcommand{\binomial}[2]{\left ( \begin{matrix} #1\\ #2 \end{matrix}\right) }
\newcommand{\tn}{\widehat{n}}
\newcommand{\SL}{{\text {\rm SL}}}
\address{Department of Mathematics, 320 TMCB,
Brigham Young University, Provo, UT 84602} \email{mjgriffin@math.byu.edu}
\address{Department of Mathematics, Emory University,
Atlanta, GA 30022} \email{ken.ono@emory.edu}
\address{Department of Mathematics,  Vanderbilt University,
Nashville, TN 37240} \email{larry.rolen@vanderbilt.edu}
\address{Max Planck Institute for Mathematics,
Vivatsgasse 7 
53111 Bonn, Germany } \email{dbz@mpim-bonn.mpg.de}
\begin{document}

\title[Jensen polynomials for the Riemann zeta function 
and other sequences]
{Jensen polynomials for the Riemann zeta function and other sequences}
\author{Michael Griffin, Ken Ono, Larry Rolen, and Don Zagier}

\thanks{The first two authors acknowledge the support of the NSF (DMS-1502390 and DMS-1601306),
and the second author is grateful for the support of the Asa Griggs Candler Fund.}

\begin{abstract}  In 1927 P\'olya  proved that
the Riemann Hypothesis is equivalent to the hyperbolicity of Jensen polynomials
for the Riemann zeta function $\zeta(s)$ at its point of symmetry. This hyperbolicity 
has been proved for degrees~$d\leq 3$.  We obtain an asymptotic formula for the central 
derivatives $\zeta^{(2n)}(1/2)$ that is accurate to all orders, which allows us to prove the
hyperbolicity of a density $1$ subset of the Jensen polynomials of each degree.  {Moreover,
we establish hyperbolicity for all $d\leq 8$. These results follow from
a general theorem which models  such polynomials by Hermite polynomials.}  In the case 
of the Riemann zeta function, this proves the GUE random matrix model prediction  in  derivative aspect.
The general theorem also allows us to prove a conjecture of Chen, Jia, and Wang on the partition function.
\end{abstract}

\maketitle

\section{Introduction and Statement of Results}
Expanding on notes of Jensen, P\'olya \cite{Polya} proved that the Riemann Hypothesis~(RH) is equivalent 
to the hyperbolicity of the Jensen polynomials for the Riemann zeta function~$\zeta(s)$ at its point of 
symmetry. More precisely, he showed that the RH is equivalent to the hyperbolicity of all Jensen polynomials 
associated to the sequence of Taylor coefficients $\{\gamma(n)\}$ defined by
\begin{equation}\label{TaylorXi}
\bigl(-1+4z^2\bigr)\,\Lambda\Bigl(\frac{1}{2}+z\Bigr)\=\sum_{n=0}^{\infty} \frac{\gamma(n)}{n!}\cdot z^{2n},
\end{equation}
where $\Lambda(s)=\pi^{-s/2}\Gamma(s/2)\zeta(s)=\Lambda(1-s)$, and where we say
that a polynomial with real coefficients is {\it hyperbolic} if all of its zeros are real, and where 
the {\it Jensen polynomial of degree $d$ and shift $n$} of an arbitrary sequence 
$\{{\alpha(0)}, \alpha(1),\alpha(2),\dots\}$ of real numbers is the polynomial 
 \begin{equation}\label{JensenPolynomial}
J_\alpha^{d,n}(X)\,:=\,\sum_{j=0}^d \binomial dj\alpha(n+j)\,X^j.
\end{equation}
Thus,  the RH is equivalent to the hyperbolicity of the polynomials $J_{\gamma}^{d,n}(X)$
for all non-negative integers $d$ and $n$ \cite{CV,DL, Polya}.
Since this condition is preserved under differentiation, to prove RH
 it would be enough to show hyperbolicity for  the $J_{\gamma}^{d,0}(X)$
\footnote{The hyperbolicity for $J_{\gamma}^{d,0}(X)$ has been confirmed for $d\leq2\cdot10^{17}$
by Chasse (cf. Theorem 1.8 of \cite{Chasse}).}. 
 Due to the difficulty of proving RH, research has focused on establishing hyperbolicity 
for all shifts~$n$ for small~$d$. {Previous to this paper, hyperbolicity was known} for $d\leq 3$ by 
work\footnote{These works use a slightly different normalization for the $\gamma(n)$.} 
of Csordas, Norfolk, and Varga, and Dimitrov and Lucas \cite{CNV, DL}.

Asymptotics for the $\gamma(n)$ were obtained of Coffey and Pustyl'nikov \cite{Coffey, Pust}. We improve 
on their results by obtaining an arbitrary precision asymptotic formula\footnote{Our results 
imply the results in \cite{Coffey, Pust}  after typographical errors are corrected.} (see 
Theorem~\ref{AsympFn}), a result that is of independent interest.
 We will use this strengthened result to prove the following theorem for all degrees $d$.

\begin{theorem}\label{XiTheorem} 
If $d\geq 1$, then $J_{\gamma}^{d,n}(X)$ is hyperbolic for all sufficiently large $n$.
\end{theorem}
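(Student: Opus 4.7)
\medskip
\noindent\textit{Proof proposal.} My plan is to show that, after an appropriate affine rescaling of the variable $X$, the normalized Jensen polynomials $J_{\gamma}^{d,n}(X)$ converge as $n\to\infty$ (with $d$ fixed) to the degree-$d$ Hermite polynomial $H_d(t)$.  Since $H_d(t)$ has $d$ distinct real zeros and the locus of degree-$d$ real polynomials with $d$ simple real roots is open in coefficient space (continuous dependence of roots on coefficients), hyperbolicity of $J_{\gamma}^{d,n}(X)$ for all sufficiently large $n$ will follow immediately for each fixed $d$.

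The first step I would carry out is to isolate a general ``Hermite approximation'' lemma, divorced from the specific sequence $\gamma$.  The lemma I have in mind says: suppose a positive sequence $\alpha(n)$ admits, for each fixed $d\ge 1$, an expansion
$$\log\frac{\alpha(n+j)}{\alpha(n)} \= j\,L(n) \+ j^2\,Q(n) \+ \epsilon_{n,j}\qquad(0\le j\le d),$$
where $Q(n)<0$, $Q(n)\to 0$, and $\epsilon_{n,j}=o(Q(n))$ uniformly in $j$.  Then with the substitution $X=e^{-L(n)}\bigl(1+t\sqrt{-2Q(n)}\bigr)$, after dividing $J_\alpha^{d,n}(X)$ by $\alpha(n)$ and a suitable $n$-dependent nonvanishing factor, the result converges uniformly on compact subsets of $\R$ to $H_d(t)$.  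This boils down to Taylor-expanding the factor $e^{Q(n)j^2}\bigl(1+t\sqrt{-2Q(n)}\bigr)^j$ in powers of the small parameter $\sqrt{-2Q(n)}$ and using the finite-difference identities that make $\sum_{j=0}^d\binomial dj(-1)^{d-j}j^k$ vanish for $k<d$ and equal $d!$ for $k=d$; these produce exactly the Hermite coefficients.

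The second step is to apply this lemma to $\alpha=\gamma$, using the all-orders asymptotic for $\gamma(n)$ supplied by Theorem~\ref{AsympFn}.  Subtracting the expansions of $\log\gamma(n+j)$ and $\log\gamma(n)$ and collecting in powers of $j$ isolates explicit functions $L(n)$ and $Q(n)$ (the latter tending to $0$ at a computable rate) together with remainder bounds small enough to verify the hypotheses of the lemma.

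The main obstacle is the uniform estimate $\epsilon_{n,j}=o(Q(n))$ on the log-ratio.  The leading terms in the asymptotic of $\gamma(n)$ grow very rapidly, so cancellation in $\log\gamma(n+j)-\log\gamma(n)$ can amplify errors beyond what the lemma tolerates; moreover, the \emph{quadratic} term in $j$ is already subleading, so it can only be read off cleanly if the expansion of $\log\gamma(n)$ is known to sufficiently many orders.  Having an expansion to \emph{all} orders, as provided by Theorem~\ref{AsympFn} (rather than the one- or two-term asymptotics of Coffey and Pustyl'nikov), is precisely what allows the quadratic-in-$j$ piece to be isolated with a remainder that is genuinely $o(Q(n))$ uniformly on $\{0,\dots,d\}$.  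With that in hand, the Hermite-convergence lemma applies and Theorem~\ref{XiTheorem} follows.
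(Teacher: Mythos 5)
Your overall strategy coincides with the paper's: a general Jensen--to--Hermite limit theorem, applied to $\gamma$ via the all-orders asymptotics of Theorem~\ref{AsympFn}, with hyperbolicity following because having $d$ simple real roots is an open condition. However, the quantitative hypothesis in your ``Hermite approximation lemma'' is too weak, and this is a genuine gap for every $d\ge 3$. After your substitution and normalization, the coefficient of $t^k$ is (up to bounded constants) $(-Q(n))^{(k-d)/2}$ times the $(d-k)$-th finite difference in $j$ of $r_{n,j}:=\alpha(n+j)\alpha(n)^{-1}e^{-jL(n)}$. The finite-difference identities you cite annihilate only contributions that are polynomial in $j$ of degree less than $d-k$; the error $\epsilon_{n,j}$ is not of that form, so it survives the differencing and is then amplified by $(-Q(n))^{(k-d)/2}$. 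Knowing only $\epsilon_{n,j}=o(Q(n))$, its contribution is $o\bigl((-Q(n))^{1+(k-d)/2}\bigr)$, which tends to zero only when $d-k\le 2$: your lemma is fine for $d\le 2$ but false in general (take $\epsilon_{n,j}$ equal to $(-Q(n))^{5/4}$ for $j=d$ and $0$ otherwise; it satisfies your hypothesis, yet the $t^0$-coefficient diverges for every $d\ge 3$). What is actually needed --- and what the paper's Theorem~\ref{GeneralTheorem2} and its refinement Theorem~\ref{MoreGeneral} assume --- is that the error be $o(\delta(n)^d)$, where $\delta(n)^2\asymp-Q(n)$; additional terms $g_i(n)j^i$ with $3\le i\le d$ are admissible provided $g_i(n)=o(\delta(n)^i)$, because, being polynomial in $j$, they are absorbed into coefficients whose limits still reproduce $e^{-t^2}$. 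So in your second step you must extract $\log\gamma(n+j)-\log\gamma(n)$ to the degree-dependent accuracy $o(\delta(n)^d)$ (modulo such polynomial terms), not merely $o(Q(n))$; Theorem~\ref{AsympFn} supplies enough terms to do this, but the plan as written stops at a precision that only settles $d\le 2$ --- which is weaker than what was already known.

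A secondary point: as literally written, the substitution $X=e^{-L(n)}\bigl(1+t\sqrt{-2Q(n)}\bigr)$ cannot produce the alternating signs $(-1)^{d-j}$ that your finite-difference identity requires, since expanding $\bigl(1+t\sqrt{-2Q(n)}\bigr)^j$ gives only nonnegative coefficients and $r_{n,j}\approx 1>0$; with the plus sign the sums $\sum_{j}\binom dj\binom jk r_{n,j}$ have no cancellation and the normalized polynomial blows up under the $\delta(n)^{-d}$ rescaling. The shift must be of the form $\delta(n)t-1$, as in the paper's argument $\bigl(\delta(n)X-1\bigr)e^{-A(n)}$: then, when $r_{n,j}\approx 1$, the binomial theorem makes the leading term $(\delta(n)t)^d$ and the corrections $e^{j^2Q(n)}$ generate the lower Hermite coefficients. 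This is presumably a slip rather than a conceptual error, but together with the error-tolerance issue it means your lemma must be restated (and its proof redone with the $o(\delta(n)^d)$ hypothesis) before the deduction of Theorem~\ref{XiTheorem} goes through.
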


{An effective proof of Theorem~\ref{XiTheorem} for small $d$ gives the following theorem.

\begin{theorem}\label{effective}
If $1\leq d\leq \TBD$, then $J_{\gamma}^{d,n}(X)$ is hyperbolic for every $n\geq 0$.
\end{theorem}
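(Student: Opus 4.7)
The plan is to make the proof of Theorem~\ref{XiTheorem} quantitatively effective and then close the finitely many remaining cases by direct verification. Theorem~\ref{XiTheorem} is obtained from a general principle that, after an appropriate rescaling, $J_\gamma^{d,n}(X)$ converges to the Hermite polynomial $H_d(X)$ as $n\to\infty$; the input to this principle is the arbitrary-precision asymptotic expansion for $\gamma(n)$ from Theorem~\ref{AsympFn}. For each fixed $d$, this argument in fact produces an explicit threshold $N_0(d)$ such that $J_\gamma^{d,n}(X)$ is hyperbolic whenever $n\ge N_0(d)$, since $H_d$ has $d$ simple real zeros separated by an absolute positive distance and any sufficiently small real perturbation preserves this property.

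First I would track all implicit constants in Theorem~\ref{AsympFn}, truncating the asymptotic series for each of $\gamma(n), \gamma(n+1),\dots, \gamma(n+d)$ at enough terms to ensure that, after the rescaling used in the Hermite approximation, the deviation from $H_d(X)$ is strictly less than the margin needed to keep $d$ distinct real roots. For $d\le 8$ the Hermite zeros and their separation are known numerically to high precision, so this margin can be made concrete, and the resulting $N_0(d)$ comes out small enough to be computationally practical.

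Next, for each $d\in\{1,\dots,8\}$, I would verify hyperbolicity of $J_\gamma^{d,n}(X)$ for the finitely many shifts $0\le n<N_0(d)$ by computing the polynomials from high-precision values of $\gamma(n),\dots,\gamma(n+d)$ and certifying real-rootedness rigorously, either via Sturm sequences, via positivity of an appropriate discriminant (equivalently, positivity of the Hankel-type determinants that encode hyperbolicity), or via interval arithmetic. Because hyperbolicity is already known for $d\le 3$ by \cite{CNV,DL}, only $d\in\{4,5,6,7,8\}$ truly require this finite check.

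The main obstacle is the quantitative calibration between the two halves: $N_0(d)$ must be simultaneously (i) large enough that the tail of the asymptotic expansion in Theorem~\ref{AsympFn} controls the perturbation of $H_d$ with room to spare, and (ii) small enough that the finite verification of $J_\gamma^{d,n}(X)$ for $n<N_0(d)$ is feasible. This forces one to replace the qualitative $o(1)$ error bounds used in Theorem~\ref{XiTheorem} with fully explicit numerical bounds on each remainder, including the ratios $\gamma(n+j)/\gamma(n)$ whose Taylor expansion in $1/\log n$ (or the appropriate asymptotic variable) drives the Hermite approximation. The cutoff $d\le 8$ presumably reflects the largest degree at which these explicit bounds, together with the corresponding computer-assisted root check, remain tractable with the methods at hand.
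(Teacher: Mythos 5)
Your proposal follows essentially the same strategy as the paper: make the convergence of the rescaled Jensen polynomials to $H_d(X)$ effective via explicit error bounds derived from Theorem~\ref{AsympFn} (the paper does this through an explicit bound $0<C(n,j)<14.25$ on the ratios $\gamma(n+j)/\gamma(n)$ and coefficient-wise proximity boxes around the Hermite coefficients certified by Hermite's criterion, yielding thresholds $M_{\varepsilon_d}$), and then dispose of the finitely many remaining shifts by a rigorous computer verification (the paper checks $n\le 10^6$ for $4\le d\le 8$ using Hermite's criterion, with $d\le 3$ already known). The only differences are cosmetic, such as your suggested use of Sturm sequences or interval arithmetic in place of Hermite's criterion for the finite check.
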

}

Theorem~\ref{XiTheorem}  follows from a general phenomenon {that Jensen polynomials for a wide class of
sequences~$\alpha$} can be modeled by the {\it Hermite polynomials} $H_d(X)$, {which we define (in a somewhat 
non-standard normalization) as the orthogonal polynomials for the measure $\mu(X)=e^{-X^2/4}$ or more explicitly}
by the generating function
\begin{equation}\label{Def_Hermite}
{\sum_{d=0}^{\infty} H_d(X)\,\frac{t^d}{d!} \= e^{-t^2+Xt}} \= 
  1\ +X\,t  \+ (X^2-2)\,\frac{t^2}{2!}\+ (X^3-6X)\,\frac{t^3}{3!}\+\cdots
\end{equation}
More precisely, we will prove the following general theorem describing the limiting behavior of
Jensen polynomials of sequences with appropriate growth.

\begin{theorem}\label{GeneralTheorem2}
{Let $\{\alpha(n)\}$, $\{A(n)\}$ and $\{\delta(n)\}$ be three sequences of positive real numbers
 with $\delta(n)$ tending to zero and satisfying
\begin{equation}\label{log_alpha_ratio}
{ \log \Bigl({\frac{\alpha(n+j)}{\alpha(n)}\Bigr) \, \= \, A(n)j-\delta(n)^2 j^2
  + \text{\rm o}\bigl(\delta(n)^d\bigr)}} \qquad\text{\rm as $n\to\infty$}  \end{equation}
for some integer $d\ge1$ and all $0\le j\le d$. Then we have
\begin{equation}\label{tag2} \lim_{n\to\infty} \biggl(\frac{\delta(n)^{-d}}{\alpha(n)}
  \,J_\alpha^{d,n}\Bigl(\frac{\delta(n)\,X\m1}{\exp(A(n))}\Bigr)\biggr) \= H_d(X),\, 
  \end{equation}
uniformly for $X$ in any compact subset of~$\R$.}
  \end{theorem}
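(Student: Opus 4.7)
The plan is to substitute the hypothesis~\eqref{log_alpha_ratio} directly into the Jensen polynomial, reduce the resulting expression to a finite sum involving only the Gaussian factor $e^{-\delta(n)^2 j^2}$, and then extract its leading order via a finite-difference computation. Write $u := \delta(n)$ and $a := A(n)$. The hypothesis rearranges to
$$\frac{\alpha(n+j)}{\alpha(n)\,e^{aj}} \= e^{-u^2 j^2}\bigl(1 + o(u^d)\bigr), \qquad 0 \le j \le d,$$
with the $o(u^d)$ uniform in $j$ because $j$ runs over a finite set. Setting $Y = (uX-1)/e^{a}$ in~\eqref{JensenPolynomial} and dividing by $\alpha(n)$ yields
$$\frac{J_\alpha^{d,n}(Y)}{\alpha(n)} \= S(u,X) \+ o(u^d),\qquad S(u,X) := \sum_{j=0}^d \binom{d}{j} e^{-u^2 j^2}(uX-1)^j,$$
with error uniform for $X$ in any compact subset of $\R$ since $(1+|uX-1|)^d$ is bounded there. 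It thus remains to prove $S(u,X) \= u^d H_d(X) + o(u^d)$.

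For $S(u,X)$ I would factor $(uX-1)^j = (-1)^j(1-uX)^j$ and invoke the classical finite-difference identity
$$\sum_{j=0}^d \binom{d}{j}(-1)^j f(j) \= (-1)^d\,(\Delta^d f)(0),$$
applied to $f(j) := e^{-u^2 j^2}(1-uX)^j = \exp(aj+bj^2)$, where $a := \log(1-uX) = O(u)$ and $b := -u^2 = O(u^2)$. Since $\Delta^d$ annihilates every polynomial of degree less than $d$ and returns $d!$ times the leading coefficient of a polynomial of degree exactly $d$, a Taylor remainder estimate yields $(\Delta^d f)(0) = d!\,a_d + O(u^{d+1})$, where $a_d$ is the coefficient of $j^d$ in the Taylor expansion of $f$. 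A direct multinomial expansion of $\exp(aj+bj^2)$ gives
$$a_m \= \sum_{l=0}^{\lfloor m/2 \rfloor} \frac{a^{m-2l}\,b^l}{l!\,(m-2l)!} \= O(u^m),$$
which justifies both the remainder estimate and the identification of the leading coefficient.

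Substituting $a = -uX + O(u^2)$ and $b = -u^2$, each surviving term obeys $a^{d-2l} b^l \sim (-1)^{d+l} u^d X^{d-2l}$, and therefore
$$d!\,a_d \,\sim\, (-1)^d u^d \sum_{l=0}^{\lfloor d/2\rfloor} \frac{d!\,(-1)^l X^{d-2l}}{l!\,(d-2l)!} \= (-1)^d u^d H_d(X),$$
the final equality being the explicit formula for $H_d(X)$ read off from \eqref{Def_Hermite}. The two factors of $(-1)^d$ cancel in the finite-difference identity, giving $S(u,X) = u^d H_d(X) + o(u^d)$ and hence \eqref{tag2}, uniformly on compact subsets of $\R$. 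I expect the main obstacle to be the careful error-term bookkeeping: one must check that the uniform $o(u^d)$ from the hypothesis is not amplified by the binomial sums, and that only the single Taylor coefficient $a_d$ survives in the limit. Both facts rest squarely on the purely quadratic shape $\phi(j) = aj + bj^2$ of the exponent in $f$, which is exactly what the hypothesis \eqref{log_alpha_ratio} supplies.
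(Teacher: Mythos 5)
Your proof is correct and, at its core, it is the same argument as the paper's: both hinge on exponentiating the hypothesis and then using the fact that a $d$-th (or $(d-k)$-th) finite difference annihilates polynomials of degree less than $d$ and extracts $d!$ times the top Taylor coefficient, which is exactly how the Hermite polynomial emerges. The only organizational difference is that the paper expands the shifted Jensen polynomial in powers of $X$ and proves the more general Theorem~\ref{MoreGeneral} (an arbitrary power series $F$ in place of $e^{-t^2}$), whereas you apply a single $d$-th difference in $j$ to $e^{-\delta(n)^2j^2}(1-\delta(n)X)^j$ and read off $H_d(X)$ directly from the generating function \eqref{Def_Hermite}; your error bookkeeping (uniformity of $o(\delta(n)^d)$ over the finitely many $j$ and over compact sets of $X$, plus the tail estimate giving $O(\delta(n)^{d+1})$) is sound.
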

  
Since the Hermite polynomials have distinct roots, and since this property of a polynomial with real
coefficients is invariant under {small} deformation, {we immediately deduce the following corollary.}
\begin{corollary*}
\label{JHypCor}
The Jensen polynomials $J_{\alpha}^{d,n}(X)$ for a sequence $\alpha\colon\mathbb N\rightarrow\mathbb R$
satisfying the conditions in Theorem~$\ref{GeneralTheorem2}$ are hyperbolic {for all but finitely many 
values $n$}.  \end{corollary*}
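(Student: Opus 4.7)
My plan is to combine Theorem~\ref{GeneralTheorem2} with two classical facts: that the Hermite polynomial $H_d(X)$ has $d$ distinct real roots (since it is the $d$-th orthogonal polynomial for the positive measure $e^{-X^2/4}dX$ on $\R$), and that the property of having only simple real roots is stable under small perturbations of a polynomial's coefficients. Because the convergence in Theorem~\ref{GeneralTheorem2} is uniform on compact subsets of $\R$, this stability is all that will be needed to transfer hyperbolicity from $H_d$ to the rescaled Jensen polynomials, and then back to $J_\alpha^{d,n}$ itself via a real affine change of variable.

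To execute this, I would first fix $\eps>0$ small enough that the closed intervals $I_k:=[\xi_k-\eps,\xi_k+\eps]$ around the zeros $\xi_1<\cdots<\xi_d$ of $H_d$ are pairwise disjoint and such that $H_d$ changes sign across each $I_k$. Set $K:=[\xi_1-\eps,\xi_d+\eps]$ and define
\begin{equation*}
\widetilde{J}_n(X)\,:=\,\frac{\delta(n)^{-d}}{\alpha(n)}\,J_\alpha^{d,n}\!\Bigl(\frac{\delta(n)X-1}{\exp(A(n))}\Bigr).
\end{equation*}
Theorem~\ref{GeneralTheorem2} yields $\widetilde{J}_n\to H_d$ uniformly on $K$, so for all sufficiently large $n$ the values of $\widetilde{J}_n$ at the $2d$ points $\xi_k\pm\eps$ agree in sign with those of $H_d$. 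The intermediate value theorem then furnishes at least one real root of $\widetilde{J}_n$ in each of the disjoint intervals $I_k$; as $\widetilde{J}_n$ has degree at most $d$, these account for all of its roots, and so $\widetilde{J}_n$ is hyperbolic.

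To finish, I would observe that the map $J_\alpha^{d,n}\mapsto\widetilde{J}_n$ consists of multiplication by the nonzero real constant $\delta(n)^{-d}/\alpha(n)$ together with the real invertible affine substitution $Y\mapsto(\delta(n)X-1)/\exp(A(n))$ (using $\delta(n)>0$ and $A(n)\in\R$), both of which preserve hyperbolicity. Hence $J_\alpha^{d,n}$ itself is hyperbolic for all sufficiently large $n$, i.e.\ for all but finitely many $n$, as claimed. There is no serious obstacle here, but two small points merit attention: one should check that $\widetilde{J}_n$ really has degree $d$ (its leading coefficient equals $\alpha(n+d)\exp(-dA(n))/\alpha(n)$, which by the log-ratio hypothesis \eqref{log_alpha_ratio} tends to~$1$ since $\delta(n)\to0$), and one should note that uniform convergence on the single compact $K$ is enough, because the $d$ sign changes on $K$ already force all $d$ roots of the degree-$d$ polynomial $\widetilde{J}_n$ to lie inside $K$.
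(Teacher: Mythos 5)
Your proposal is correct and follows essentially the same route as the paper, which deduces the corollary in one stroke from the facts that $H_d(X)$ has distinct real roots and that this property is stable under small deformations of a real polynomial, combined with the uniform convergence of the rescaled Jensen polynomials and the hyperbolicity-preserving affine change of variable. Your sign-change/intermediate-value argument and the check on the leading coefficient simply make explicit the stability step that the paper invokes implicitly.
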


\noindent
Theorem~\ref{XiTheorem} is a special case of this corollary. Namely, we shall employ 
Theorem~\ref{AsympFn} to prove that the Taylor coefficients $\{\gamma(n)\}$ satisfy the required 
growth conditions in Theorem~\ref{GeneralTheorem2} for every~$d\ge2$.

Theorem~\ref{GeneralTheorem2} in the case of the Riemann zeta function is the {\it derivative aspect 
Gaussian Unitary Ensemble} (GUE) random matrix model prediction for the zeros of Jensen polynomials. 
To make this precise, recall that Dyson, Montgomery, and Odlyzko \cite{KS,MO2,OD} conjecture that 
the non-trivial zeros of the Riemann zeta function are distributed like the eigenvalues of random 
Hermitian matrices.  These eigenvalues satisfy Wigner's Semicircular Law, as do the roots of the Hermite
polynomials $H_d(X)$, when suitably normalized,  as $d\rightarrow+\infty$ (see Chapter 3 of~\cite{RMB}). 
The roots of $J_{\gamma}^{d,0}(X)$, as $d\rightarrow+\infty$, approximate the zeros of 
$\Lambda\left(\frac{1}{2}+z\right)$ (see \cite{Polya} or Lemma 2.2 of \cite{CC}), and so GUE predicts 
that these roots also  obey the Semicircular Law.  Since the derivatives of $\Lambda\left(\frac{1}{2}+z\right)$ 
are also predicted to satisfy GUE, it is natural to consider the limiting behavior of $J_{\gamma}^{d,n}(X)$ 
as $n\rightarrow+\infty$.  The work here proves that these derivative aspect limits are the Hermite 
polynomials $H_d(X)$, which, as mentioned above, satisfy GUE in degree aspect.

Returning to the general case of sequences with suitable growth conditions,
Theorem~\ref{GeneralTheorem2} has applications in combinatorics where the
 hyperbolicity of polynomials determines the log-concavity of enumerative statistics.
For example, see the classic theorem by Heilmann and Leib \cite{HL}, along with works by Chudnovsky 
and Seymour, Haglund, Stanley, and Wagner \cite{CS, Haglund, HOW, Stanley, Wagner}, to name a few.
Theorem~\ref{GeneralTheorem2} represents a new criterion for establishing the hyperbolicity
of polynomials in enumerative combinatorics. The theorem reduces the problem to determining whether
suitable asymptotics hold.  Here we were motivated by a conjecture of Chen, Jia and Wang
concerning the Jensen polynomials $J_p^{d,n}${$(X)$},
where $p(n)$ is the partition function. 
Nicolas \cite{Nicolas} and Desalvo and Pak \cite{DP} proved that $J_p^{2,n}(X)$ is hyperbolic for $n\geq 25$, and more
recently, Chen, Jia, and Wang proved \cite{ChenJiaWang} that  $J_p^{3,n}(X)$ is hyperbolic for $n\geq 94$, 
inspiring them to state as a conjecture the following result.
\begin{theorem}[Chen-Jia-Wang Conjecture]
\label{Chen-Jia-Wang-Conj} For every integer $d\ge1$ there exists an integer~$N(d)$
such that $J_p^{d,n}(X)$ is hyperbolic for $n \geq N(d)$.
\end{theorem}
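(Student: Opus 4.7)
The plan is to deduce Theorem~\ref{Chen-Jia-Wang-Conj} from the analysis underlying Theorem~\ref{GeneralTheorem2}, applied to $\alpha(n)=p(n)$. Once the suitably normalized Jensen polynomial is shown to converge on compact subsets of $\mathbb R$ to the Hermite polynomial $H_d(X)$, the same open-set perturbation argument that yields the Corollary produces an integer $N(d)$ beyond which $J_p^{d,n}(X)$ has only real roots.

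First I would invoke the Hardy--Ramanujan--Rademacher expansion
\[
\log p(n) \= \pi\sqrt{2n/3} \m \log(4n\sqrt{3}) \+ \text{(explicit smaller terms in powers of }n^{-1/2}\text{)}\,,
\]
from which, treating the right-hand side as a smooth function of~$n$, one reads off the natural scales $A(n)\sim \pi/\sqrt{6n}$ and $\delta(n)^2 \sim (\pi/(8\sqrt{6}))\,n^{-3/2}$, so that $\delta(n)\asymp n^{-3/4}$ and $\delta(n)^d \asymp n^{-3d/4}$. The resulting Taylor expansion of $\log(p(n+j)/p(n))$ around $j=0$ has coefficients $c_k(n)\asymp n^{(1-2k)/2}$ for $k\ge 3$. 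This is where the principal obstacle appears: the cubic residual $c_3(n)\asymp n^{-5/2}$ is $o(\delta(n)^d)$ only when $d\le 3$, so hypothesis~\eqref{log_alpha_ratio} is not quite met by $p(n)$ once $d\ge 4$, and Theorem~\ref{GeneralTheorem2} cannot be invoked as a black box.

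To surmount this I would re-run the combinatorial computation inside the proof of Theorem~\ref{GeneralTheorem2} while keeping the higher Taylor terms. The identity $\sum_{j=0}^d \binom{d}{j}\,j^k\,y^j=(y\,\partial_y)^k(1+y)^d$ evaluated at $y=\delta(n)X-1$ shows that the lowest power of $\delta(n)$ in this quantity is $\delta(n)^{d-k}$ for $3 \le k\le d$ (and $\delta(n)^0$ for $k>d$), so the contribution of the $c_k(n)\,j^k$ Taylor term to the normalized Jensen polynomial has size $c_k(n)/\delta(n)^k = O(n^{1/2-k/4}) \to 0$ for each $k\ge 3$; the cross terms arising from expansion of $\exp(\sum_{k\ge 3} c_k(n)\,j^k)$ satisfy even more favorable bounds since each extra factor contributes an additional $n^{-l/4}$-type decay. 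Consequently the Hermite limit persists, and uniform convergence of $\delta(n)^{-d} p(n)^{-1} J_p^{d,n}((\delta(n)X-1)/e^{A(n)})$ to $H_d(X)$ on compacta forces $J_p^{d,n}(X)$ to have $d$ distinct real roots for all $n$ beyond some threshold, which we take as $N(d)$.
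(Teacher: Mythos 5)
Your proposal is correct and follows the same overall strategy as the paper: Rademacher-type asymptotics for the coefficients, verification of the growth hypotheses, convergence of the renormalized Jensen polynomials to $H_d(X)$, and then the deformation argument (simple real roots of $H_d$) to get hyperbolicity for $n\ge N(d)$. The differences are worth noting. The paper does not treat $p(n)$ directly; it proves the more general Theorem~\ref{MFCase} for coefficients of arbitrary weakly holomorphic forms, using the expansion \eqref{mftag}, and obtains Theorem~\ref{Chen-Jia-Wang-Conj} as the special case $f=1/\eta$, $m=1/24$, $k=-1/2$. Your observation that the literal hypothesis \eqref{log_alpha_ratio} fails for $d\ge4$ (the cubic coefficient $\asymp n^{-5/2}$ is not $\mathrm o(\delta(n)^d)\asymp\mathrm o(n^{-3d/4})$) is a genuine and worthwhile point, but your remedy of re-running the proof with the higher Taylor terms essentially re-proves what the paper's Theorem~\ref{MoreGeneral} already supplies: since $0\le j\le d$ is bounded, the terms $c_k(n)j^k$ with $k\ge3$ can be absorbed into the coefficients $C_k(n)=c_k(n)/\delta(n)^k\to0$ (and terms with $k>d$ into the $\mathrm o(\delta(n)^d)$ error), so Theorem~\ref{MoreGeneral} with $F(t)=e^{-t^2}$ applies as a black box and yields the Hermite limit; this is also exactly the role of the relaxed condition \eqref{NEED} with $g_i(n)=\mathrm o(\delta(n)^i)$ used in Section~\ref{XiProofSection}. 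Your $(y\partial_y)^k(1+y)^d$ bookkeeping is the same computation as the paper's finite-difference argument in the proof of Theorem~\ref{MoreGeneral}, just organized differently. Finally, a harmless slip: the correct normalization is $\delta(n)^2\sim\tfrac{\pi}{4\sqrt6}\,n^{-3/2}$ (i.e.\ $\delta(n)=(\pi/2)^{1/2}(1/24)^{1/4}n^{-3/4}+\mathrm O(n^{-5/4})$ as in Section~\ref{MFCaseProof}), not $\tfrac{\pi}{8\sqrt6}\,n^{-3/2}$; only the order $\delta(n)\asymp n^{-3/4}$ enters your argument, so nothing breaks.
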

 
The table below gives the conjectured minimal value for $N(d)$ for $d\le32$. More
precisely, for each $d\le32$ it gives the smallest integer such that 
$J_p^{d,n}(X)$ is hyperbolic for $N(d)\le n\le 50000$.

\begin{table}[h]
\begin{tabular}{|r|cc|cc|cc|cc|cc|cc|cc|cc|}
\hline 
$d$    && 1 && 2 && 3  && 4 && 5 && 6 && 7 && 8  \\   \hline
$N(d)$ && 1 && 25 && 94 && 206 && 381 && 610 && 908 && 1269 \\
\hline
\hline 
$d$    && 9 && 10 && 11 && 12 && 13 && 14 && 15 && 16 \\   \hline
$N(d)$ && 1701 && 2210 && 2787 && 3455 && 4194 && 5018 && 5927 && 6917  \\
\hline
\hline 
$d$     && 17 && 18 && 19 && 20 && 21 && 22 && 23 && 24 \\   \hline
$N(d)$ && 8004 && 9171 && 10435 && 11788 && 13232 && 14777 && 16407 && 18146\\
\hline
\hline 
$d$     && 25 && 26 && 27 && 28 && 29 && 30 && 31 && 32  \\   \hline
$N(d)$  && 19975 && 21907 && 23938 && 26068 && 28305 && 30636 && 33084 && 35627  \\
\hline
\end{tabular} \end{table}

\begin{remark}
Larson and Wagner \cite{LarsonWagner} have made the proof of Theorem~\ref{Chen-Jia-Wang-Conj} effective by a brute 
force implementation of Hermite's criterion (see Theorem C of \cite{DL}). They showed that the values in the table 
are correct for $d=4$ and $d=5$ {and} that $N(d)\leq (3d)^{24d}(50d)^{3d^2}$ in~general. The true 
{values are presumably much smaller, and are probably of only polynomial growth, the numbers in the 
table being approximately of size $N(d)\approx10\,d^2\log d$.}
\end{remark}

{Theorem~\ref{Chen-Jia-Wang-Conj}} suggests a natural generalization.
As is well-known, the numbers $p(n)$ are the Fourier coefficients of a modular form, namely 
 \begin{equation}\label{PartitionGenFunction}
  \frac{1}{\eta(\tau)} \= \sum_{n=0}^{\infty}\,p(n)\,q^{n-\frac1{24}}
  \qquad(\,\Im(\tau)>0,\;q=e^{2\pi i\tau})\,, \end{equation}
where $\eta(\tau)=q^{1/24}\prod(1-q^n)$ is the Dedekind eta-function.  Theorem~\ref{Chen-Jia-Wang-Conj} 
is then an example of a more general theorem about the Jensen polynomials of the Fourier coefficients of 
an arbitrary {\it weakly holomorphic modular form}, which for the purposes of this article will
mean a modular form (possibly of fractional weight and with multiplier system) {with real Fourier coefficients} on the full modular
group $\SL_2(\Z)$  {that is holomorphic apart from} a pole of (possibly fractional) positive order at infinity.  If $f$ is such a form, we denote its Fourier expansion by\footnote{Note 
that with these notations we have $p(n)=a_f(n-\frac1{24})$ for $f=1/\eta$, but making this shift of 
argument is irrelevant for the applicability of Theorem~\ref{MFCase} to Theorem~\ref{Chen-Jia-Wang-Conj}, 
since the required asymptotic property 
 is obviously invariant under translations of~$n$.}

\begin{equation}\label{fFourier}
 f(\tau) \=  \sum_{n\in -m+\Z_{\ge0}}a_f(n)\,q^n \qquad(m\in\Q_{>0}\,,\;a_f(-m)\ne0) \end{equation}
Then we will prove the following theorem, which includes Theorem~\ref{Chen-Jia-Wang-Conj}.
\begin{theorem}\label{MFCase} If $f$ is a weakly holomorphic modular form as above, then for any fixed 
$d\geq1$ the Jensen polynomials $J_{a_f}^{d,n}(X)$ are hyperbolic for all sufficiently large~$n$.
\end{theorem}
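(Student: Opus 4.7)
The plan is to derive Theorem~\ref{MFCase} from the Corollary of Theorem~\ref{GeneralTheorem2}. For each fixed $d\ge 1$, I would exhibit sequences $A(n)$ and $\delta(n)$ of positive reals with $\delta(n)\to 0$ such that $\alpha(n)=a_f(n)$ satisfies the growth condition~\eqref{log_alpha_ratio}. The Corollary then yields hyperbolicity of $J_{a_f}^{d,n}(X)$ for all but finitely many~$n$, which is the conclusion of the theorem.

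The essential input is a high-precision asymptotic for $a_f(n)$. Since $f$ is weakly holomorphic on $\SL_2(\Z)$ with weight $k$ (possibly fractional), a multiplier system, and principal part $\sum_{-m\le\nu<0}a_f(\nu)q^\nu$ at infinity, the classical Hardy--Ramanujan--Rademacher circle method produces an exact series expressing $a_f(n)$ in terms of generalized Kloosterman sums and modified Bessel functions $I_{1-k}$. The dominant contribution comes from the term with $c=1$, $\nu=-m$, and the full asymptotic expansion $I_\nu(z)\sim (e^z/\sqrt{2\pi z})\sum_{j\ge 0}a_j(\nu)z^{-j}$ as $z\to\infty$ yields, for any $N\ge 1$,
\[
a_f(n) \;=\; C\, n^{(k-1)/4}\, e^{4\pi\sqrt{m n}} \Bigl( 1 + \sum_{j=1}^{N} c_j\, n^{-j/2} + O\bigl(n^{-(N+1)/2}\bigr) \Bigr),
\]
with explicit constants $C$ and $c_j$; the Rademacher terms with $c\ge 2$ are absorbed into an exponentially smaller error. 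Taking logarithms and Taylor-expanding in $j$ produces $\log(a_f(n+j)/a_f(n))$ whose coefficients of $j$ and $j^2$ force $A(n)\sim 2\pi\sqrt{m/n}$ and $\delta(n)^2\sim \pi\sqrt{m}/(2n^{3/2})$, so indeed $\delta(n)\to 0$.

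Verifying the bound~\eqref{log_alpha_ratio} to the required $o(\delta(n)^d)$ precision then reduces to showing that the cubic and higher-order residual terms in this Taylor expansion, together with the lower-order corrections coming from the subleading $n^{-j/2}$ terms in the Bessel asymptotic, fit inside the tolerance once $A(n)$ and $\delta(n)$ are chosen with sufficiently many subleading terms. This mirrors the role of Theorem~\ref{AsympFn} in the proof of Theorem~\ref{XiTheorem} for the Riemann $\Xi$-coefficients $\gamma(n)$. The main obstacle is technical: one must push the Rademacher expansion to enough orders in $n^{-1/2}$ and carefully match its structure against $\delta(n)^d\sim n^{-3d/4}$ uniformly in $j=0,\dots,d$, while at the same time handling the fractional weight and multiplier system that may appear in $f$. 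Once the asymptotic is available to the requisite precision, the verification of~\eqref{log_alpha_ratio} and the consequent application of the Corollary of Theorem~\ref{GeneralTheorem2} close the argument for every $d\ge 1$.
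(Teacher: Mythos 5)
Your overall route is the paper's route: quote the circle--method/Rademacher asymptotic for $a_f(n)$ with the dominant Bessel term expanded to all orders, take logarithms of $a_f(n+j)/a_f(n)$, read off $A(n)\sim 2\pi\sqrt{m/n}$ and $\delta(n)^2\sim\tfrac{\pi}{2}\sqrt m\,n^{-3/2}$, and conclude by the Corollary to Theorem~\ref{GeneralTheorem2}. These leading choices agree exactly with the paper's (compare \eqref{mftag} and the sentence following it), and the paper's proof is no more detailed than yours at this point.

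However, the step you single out as ``the main obstacle'' is set up in a way that cannot be carried out for $d\ge4$, and your proposed remedy (taking $A(n)$ and $\delta(n)$ with more subleading terms) does not fix it. The expansion of $\log\bigl(a_f(n+j)/a_f(n)\bigr)$ contains a genuine cubic term $4\pi\sqrt m\binom{1/2}{3}j^3n^{-5/2}+\cdots$ of size $\asymp n^{-5/2}$, while the tolerance in \eqref{log_alpha_ratio} is $o\bigl(\delta(n)^d\bigr)\asymp o\bigl(n^{-3d/4}\bigr)$; for $d\ge 4$ one has $n^{-5/2}\neq o(n^{-3d/4})$, and no expression of the form $A(n)j-\delta(n)^2j^2$ (a quadratic in $j$ with no constant term) can approximate a cubic in $j$ on the $d+1\ge5$ points $j=0,\dots,d$ to better accuracy than a fixed multiple of its leading coefficient. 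So condition \eqref{log_alpha_ratio} in its literal form fails for $\{a_f(n)\}$ once $d\ge4$, and the argument must instead pass through the more general Theorem~\ref{MoreGeneral} (equivalently, the relaxed hypothesis \eqref{NEED} used in Section~\ref{XiProofSection}): extra terms $g_i(n)j^i$ with $3\le i\le d$ are harmless provided $g_i(n)=o\bigl(\delta(n)^i\bigr)$, since then the normalized coefficients $C_i(n)$ in \eqref{tag4} acquire contributions tending to $0$ and the Hermite limit, hence hyperbolicity, is unaffected. For modular forms this is exactly what holds: the coefficient of $j^i$ in \eqref{mftag} is $O\bigl(n^{\frac12-i}\bigr)=o\bigl(n^{-3i/4}\bigr)$ for $i\ge3$, so Theorem~\ref{MoreGeneral} with $F(t)=e^{-t^2}$ and $E(n)=e^{A(n)}$ applies. (Two harmless slips: the polynomial factor should be $n^{(2k-3)/4}$, not $n^{(k-1)/4}$, since the Bessel prefactor $z^{-1/2}$ contributes $n^{-1/4}$; and the Rademacher index is $I_{1-k}$ versus the paper's $I_{k-1}$, which differ only by an exponentially smaller $K$-Bessel term. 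Neither affects the leading $A(n)$, $\delta(n)$.)
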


Our results are proved by showing that each of the sequences of interest to us (the partition 
function, the Fourier coefficients of weakly holomorphic modular forms, and the Taylor coefficients
at~$s=\frac12$ of ${4}s(1-s)\La(s)$)    satisfies the hypotheses of Theorem~\ref{GeneralTheorem2}, which
we prove in Section~\ref{GeneralTheoremProof}.  Actually, in Section~\ref{GeneralTheoremProof} we prove a
more general result (Theorem~\ref{MoreGeneral}) that gives the limits of suitably normalized Jensen polynomials 
for an even bigger class of sequences having suitable asymptotic properties (but without necessarily the
corollary about hyperbolicity). Theorem~\ref{MFCase} giving the hyperbolicity for coefficients of modular
forms (and hence also for the partition function) is proved in Section~\ref{MFCaseProof}.  In 
Section~\ref{XiTheoremProof} we prove Theorem~\ref{AsympFn}, which gives an asymptotic formula to all 
orders for the Taylor coefficients of $\La(s)$ at~$s=\frac12$, and in Section~\ref{XiProofSection} we 
prove {Theorems~\ref{XiTheorem}  and \ref{effective}} for the Riemann zeta function by using these asymptotics to verify that
the hypotheses of Theorem~\ref{GeneralTheorem2} are fulfilled by the numbers~$\gamma(n)$.
We conclude in Section~\ref{examples} with some numerical examples.

\section*{Acknowledgements} \noindent
The authors thank the generosity of the Max Planck Institute for Mathematics in Bonn for its support and hospitality.
The authors thank William Y. C. Chen, Rick Kreminski, Hannah Larson, Steffen L\"obrich, Peter Sarnak, and Ian Wagner for discussions
related to this work. They also thank Jacques G\'elinas for bringing their attention to old work of Hadamard cited as a footnote in Section 4.

\section{Proof of Theorem \ref{GeneralTheorem2}}\label{GeneralTheoremProof}

We deduce Theorem~\ref{GeneralTheorem2} from {the following} more general result.

\begin{theorem}\label{MoreGeneral}
Suppose that  $\{E(n)\}$ and $\{\delta(n)\}$ are 
positive real sequences with $\delta(n)$ tending to~0, and that $F(t)=\sum_{i=0}^{\infty} c_it^i$
is a formal power series with complex coefficients.
For a fixed $d\geq 1,$ suppose that there are real sequences $\{C_0(n)\},\dots, \{C_{d}(n)\}$, with 
$\lim_{n\rightarrow +\infty} C_i(n)=c_i$ for $0\leq i\leq d$, such that for $0\leq j\leq d$  we have
\begin{equation}\label{tag4}
 {\frac{\alpha(n+j)}{\alpha(n)}\,E(n)^{-j}}
  \= \sum_{i=0}^d\, {C_i(n)\, \delta(n)^i j^i} \+ \text{\rm o}\bigl(\delta(n)^{d}\bigr) \qquad\text{as } n\to +\infty.
\end{equation}
Then the conclusion of Theorem~\ref{GeneralTheorem2} holds with $\,\exp(A(n))\,$ replaced by~$E(n)$ and $H_d(X)$ replaced by $H_{F,d}(X)$, 
where the polynomials $H_{F,m}({X})\in\Bbb C[x]$ are now defined either by the generating function 
${F(-t)}\,e^{Xt}=\sum H_{F,m}(X)\,t^m/m!$ or 
in closed form by $H_{F,m}{(X}):=m!\,\sum_{k=0}^m {(-1)^{m-k}}c_{m-k}\,X^k/k!\,$.
\end{theorem}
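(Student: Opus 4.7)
The plan is to substitute the prescribed change of variable directly into the definition of the Jensen polynomial, apply the asymptotic hypothesis~(\ref{tag4}), and extract the limit through a short Stirling-number computation.

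First I would expand
\[
J_\alpha^{d,n}\!\left(\frac{\delta(n) X - 1}{E(n)}\right) \= \sum_{j=0}^d \binom{d}{j}\, \alpha(n+j)\, E(n)^{-j}\, (\delta(n) X - 1)^j,
\]
divide through by $\alpha(n)\,\delta(n)^d$, and insert the asymptotic formula~(\ref{tag4}).  Since $d$ is fixed, the $o(\delta^d)$ error in~(\ref{tag4}) is automatically uniform in $0\le j\le d$, so after dividing by $\delta^d$ it contributes $o(1)\cdot\sum_j\binom{d}{j}(\delta X - 1)^j = o(1)\cdot(\delta X)^d$, which tends to zero uniformly on any compact $X$-set.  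What remains is the main term
\[
\sum_{i=0}^d C_i(n)\,\delta(n)^{i-d}\sum_{j=0}^d \binom{d}{j}\, j^i\, (\delta(n) X - 1)^j.
\]

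The heart of the argument is to evaluate the inner $j$-sum in closed form.  Writing $j^i=\sum_{k=0}^i S(i,k)\,j^{\underline k}$ in Stirling numbers of the second kind and using the identity $\binom{d}{j}j^{\underline k}=d^{\underline k}\binom{d-k}{j-k}$, one obtains
\[
\sum_{j=0}^d\binom{d}{j}\,j^i\,(\delta X - 1)^j \= \sum_{k=0}^i S(i,k)\,d^{\underline k}\,(\delta X - 1)^k(\delta X)^{d-k}.
\]
Multiplying by $\delta^{i-d}$ turns the $k$-th summand into $S(i,k)\,d^{\underline k}\,(\delta X-1)^k\,\delta^{i-k}\,X^{d-k}$, and in the limit $\delta(n)\to 0$ only the diagonal term $k=i$ survives: terms with $k<i$ carry the positive power $\delta^{i-k}$ and vanish, while terms with $k>i$ are killed by $S(i,k)=0$.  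What is left is $d^{\underline i}(-1)^i X^{d-i}$.  Summing over $i$ and using $C_i(n)\to c_i$ yields
\[
\sum_{i=0}^d c_i(-1)^i\frac{d!}{(d-i)!}X^{d-i} \= d!\sum_{k=0}^d(-1)^{d-k}c_{d-k}\,\frac{X^k}{k!}\=H_{F,d}(X),
\]
matching the closed-form expression in the theorem.  Uniform convergence on compact subsets of~$\R$ is then automatic, since we have pointwise convergence of polynomials of degree at most~$d$ to a polynomial, and on this finite-dimensional space all norms are equivalent.  Finally, to derive Theorem~\ref{GeneralTheorem2} it suffices to exponentiate~(\ref{log_alpha_ratio}) and Taylor-expand $e^{-\delta^2 j^2}$, producing precisely~(\ref{tag4}) with $E(n)=\exp(A(n))$ and $C_{2m}(n)=(-1)^m/m!$ (zero for odd indices); these are the Maclaurin coefficients of $F(t)=e^{-t^2}$, and $F(-t)e^{Xt}=e^{-t^2+Xt}$ is the generating function~(\ref{Def_Hermite}) for the Hermite polynomials.

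The sole technical obstacle is the Stirling-number reduction and the verification that the $\delta(n)\to 0$ limit isolates exactly the diagonal $k=i$ in each inner sum\,---\,this is what matches the $\delta^{-d}$ prefactor against the available powers of $\delta$ and produces a finite, nonzero polynomial limit.  Once this dominance is recognized, the identification with the closed form of $H_{F,d}(X)$ and the passage to the special case of Theorem~\ref{GeneralTheorem2} are routine bookkeeping.
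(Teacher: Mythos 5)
Your proposal is correct and follows essentially the same route as the paper's proof: substitute the rescaled variable, insert the hypothesis~(\ref{tag4}), and identify the unique term that survives as $\delta(n)\to0$, with the specialization $E(n)=e^{A(n)}$, $F(t)=e^{-t^2}$ recovering Theorem~\ref{GeneralTheorem2}. The only difference is organizational: where the paper extracts the coefficient of $X^k$ and uses the vanishing of the $(d-k)$-th finite difference of $j\mapsto j^i$, you evaluate the inner $j$-sum in closed form via Stirling numbers, which is the same combinatorial fact in disguise, since $\Delta^{k}\bigl(j^{i}\bigr)\big|_{j=0}=k!\,S(i,k)$.
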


\begin{proof}[Proof of Theorem~\ref{MoreGeneral} and Theorem~\ref{GeneralTheorem2}] 
{After replacing $\exp(A(n))$ by $E(n)$,} the polynomial appearing on the left-hand side of~(\ref{tag2}) {becomes}
$$ \frac{\delta(n)^{-d}}{\alpha(n)}\,J_\alpha^{d,n}\Bigl(\frac{\delta(n)\,X\m1}{E(n)}\Bigr)
 \= \sum_{k=0}^d\binom dk\,\Biggl[\delta(n)^{k-d}\,
 \sum_{j=k}^d(-1)^{j-k}\,\binom{d-k}{j-k}\,\frac{\alpha(n+j)}{\alpha(n)E(n)^j}\Biggr]\;X^k\,.
$$ 
Since $0\leq j\leq d$, and since the error term in (\ref{tag4}) is ${\rm o}(\delta(n)^d),$
we may reorder summation {and find that the limiting value as $n\to +\infty$ of} the quantity in square brackets {satisfies}
$${\lim_{n\rightarrow +\infty} {\Biggl[} \sum_{i=0}^d C_i(n)}\,\delta(n)^{k-d+i}\,
\sum_{j=k}^d(-1)^{j-k}\,\binom{d-k}{j-k}\,j^i {\Biggr]} \=  {(-1)^{d-k}}(d-k)!\,c_{d-k}\,,
$$
{because} the inner sum, which is the $(d-k)$th difference of the polynomial
$j\mapsto j^i$ evaluated at $j=0$, vanishes for $i<d-k$ and equals $(d-k)!$ for~$i=d-k$. 
Theorem~\ref{MoreGeneral} follows{, and} Theorem~\ref{GeneralTheorem2} is just the special case 
$E(n)= e^{A(n)}$ {and} $F(t)=e^{-t^2}$. 
\end{proof}

\section{Proof of Theorem~\ref{MFCase}}\label{MFCaseProof}

Assume that $f$ is a modular form of (possibly fractional) weight~$k$ on $\text{SL}_2(\Z)$
(possibly with multiplier system) and with a pole of (possibly fractional) order~$m>0$ at
infinity, and write its Fourier expansion at infinity as in~\eqref{fFourier}.
It is standard, either by the circle method of Hardy--Ramanujan--Rademacher or by using Poincar\'e 
series (for example, see \cite{BOOK}), that the Fourier coefficients of~$f$ have the asymptotic form
\begin{equation}\label{asymptotics}
  a_f(n) = A_f\,n^{\frac{k-1}2}\,I_{k-1}(4\pi\sqrt{mn}) + \text O\bigl(n^C\,e^{2\pi\sqrt{mn}}\bigr) 
\end{equation}
as $n\to\infty$ for some non-zero constants $A_f$ (an explicit multiple of $a_f(-m)$) and~$C$,
where $I_\kappa(x)$ denotes the usual $I$-Bessel function.  In view of the expansion of Bessel functions at infinity, 
this implies that $a_f(n)$ has an asymptotic expansion to all orders in $1/n$ of the form 
$$a_f(n)\;\sim\;e^{4\pi\sqrt{mn}}\;n^{\frac{2k-3}4}\,\exp\Bigl(c_0\+\frac{c_1}n\+\frac{c_2}{n^2}\+\cdots\Bigr)$$
for some constants $c_0,\,c_1,\,\dots$ depending on~$f$ (and in fact only on~$m$ and $k$ if we normalize
the leading coefficient $a_f(-m)$ of~$f$ to be equal to~1).  This gives an asymptotic expansion 
\begin{equation}\label{mftag}
{\log \Bigl(} \frac{a_f(n+j)}{a_f(n)}{\Bigr)}\;\sim\; 4\pi\sqrt m\,\sum_{i=1}^\infty \binom{1/2}i\frac{j^i}{n^{i-\frac12}} 
  \+\frac{2k-3}4\sum_{i=1}^\infty \frac{(-1)^{i-1}j^i}{i\,n^i}
  \+ \sum_{i,k\ge1}c_k\binom{-k}i\frac{j^i}{n^{i+k}}
\end{equation}
valid to all orders in~$n$, and it follows  that the sequence $\{a_f(n)\}$ satisfies the hypotheses of
Theorem~\ref{GeneralTheorem2} with $A(n)=2\pi\sqrt{m/n}+\text O(1/n)$ and 
$\delta(n)=(\pi/2)^{1/2}m^{1/4}n^{-3/4}+\text O(n^{-5/4})$.
Theorem~\ref{MFCase} then follows from the corollary to Theorem~\ref{GeneralTheorem2}.

\section{Asymptotics for $\La^{(n)}\bigl(\frac12\bigr)$}\label{XiTheoremProof}

Previous work of Coffey \cite{Coffey} and Pustyl'nikov \cite{Pust}
offer asymptotics\footnote{It is interesting to note that Hadamard previously obtained rough estimates for these derivatives in 1893. His formulas are correctly reprinted on p. 125 of \cite{GuoWang}.} for the derivatives $\La^{(n)}\bigl(\tfrac12\bigr)$. Here we follow a slightly different approach and obtain effective asymptotics,
a result which is of independent interest. In order to describe our asymptotic expansion, we first give a formula for these derivatives in terms of an auxiliary function, whose asymptotic expansion we shall then determine.

Following Riemann, (cf. Chapter 8 of \cite{Davenport}) we have
  $$ \La(s)  = \int_0^\infty t^{\frac s2-1}\,\tz(t)\,dt
  = \frac1{s(s-1)} + \int_1^\infty \bigl(t^{\frac s2}+t^{\frac{1-s}2}\bigr)\,\tz(t)\,\frac{dt}t, $$
where $\tz(t)=\sum_{k=1}^\infty e^{-\pi k^2t}=\frac12(t^{-1/2}-1)+t^{-1/2}\tz(1/t)\,$. It follows that
  \begin{equation}\label{LamFn} \La^{(n)}\bigl(\tfrac12\bigr) = -\,2^{n+2}\,n! + \frac{F(n)}{2^{n-1}}
 \end{equation}
for $n>0$ (both are of course zero for $n$ odd), where $F(n)$ is defined for any real $n\ge0$ by
 \begin{equation}\label{FnDefn}  F(n) =  \int_1^\infty (\log t)^n\,t^{-3/4}\,\tz(t)\,dt \,.  \end{equation}
In particular if $n$ is a positive integer, then the Taylor coefficients $\gamma(n)$ defined in (\ref{TaylorXi}) satisfy
\begin{equation}\label{gamma_formula}
\gamma(n)=\frac{n!}{(2n)!}\cdot \left(8\binom {2n}2 \La^{(2n-2)} \bigl(\tfrac12\bigr)-\La^{(2n)} \bigl(\tfrac12\bigr)\right)=\frac{n!}{(2n)!} \cdot\,\frac{32 \binom {2n}2F(2n-2) - F(2n)}{2^{2n-1}}.
\end{equation}

\begin{theorem}
\label{AsympFn}
If $n>0$ then the function $F(n)$ defined by \eqref{FnDefn} is given to all orders in~$n$ by the asymptotic expansion
 $$  F(n) \;\sim\; \sqrt{2\pi}\;\frac{L^{n+1}}{\sqrt{(1+L)n-\frac3{4}L^2}}\;e^{L/4-n/L+3/4}
   \,\Bigl(1 + \frac{b_1}n + \frac{b_2}{n^2} + \cdots\Bigr) \qquad(n\to\infty)\,,$$
where $L=L(n)\approx\log\bigl(\frac{n}{\log n}\bigr)$ is the unique positive 
solution of the equation $n=L(\pi e^L+\frac34)$ and each coefficient~$b_k$ belongs to $\Bbb Q(L)$, 
the first value being $b_1=\frac{2L^4+9L^3+16L^2+6L+2}{24\,(L+1)^3}$.
\end{theorem}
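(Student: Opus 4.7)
The plan is to evaluate $F(n)$ by the Laplace/saddle-point method applied to the integral representation~\eqref{FnDefn}. As a preliminary reduction, I would replace $\tz(t)$ by its leading term $e^{-\pi t}$, since the remaining tail $\sum_{k\ge2} e^{-\pi k^2 t}$ is uniformly bounded on $[1,\infty)$ by a constant times $e^{-4\pi t}$; the same saddle-point analysis applied to the integrand with $e^{-\pi t}$ replaced by $e^{-4\pi t}$ produces a contribution that is exponentially smaller than the main estimate, and hence is absorbed trivially into the $b_k/n^k$ series.  It thus suffices to analyze
$$ F_1(n) \;:=\; \int_1^\infty (\log t)^n\, t^{-3/4}\, e^{-\pi t}\, dt \= \int_1^\infty e^{g(t)}\, dt, $$
where $g(t) = n\log\log t - \tfrac34\log t - \pi t$.

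Next I would locate the saddle by solving $g'(t^*) = 0$, which rearranges to $n = (\log t^*)(\pi t^* + \tfrac34)$.  Setting $t^* = e^L$ gives exactly the transcendental equation $n = L(\pi e^L + \tfrac34)$ of the theorem, and a straightforward monotonicity argument shows it has a unique positive solution for all sufficiently large $n$, with the iterative estimate $L \approx \log(n/\log n)$.  Using the saddle relation to eliminate $\pi e^L = n/L - \tfrac34$ in $g(t^*)$ yields
$$ g(t^*) \= n\log L \,-\, \tfrac34 L \,-\, \tfrac nL \,+\, \tfrac34, $$
while a direct computation gives $-g''(t^*) = e^{-2L}\bigl((1+L)n - \tfrac34 L^2\bigr)/L^2$.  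Combining the leading-order Laplace approximation $F_1(n)\sim e^{g(t^*)}\sqrt{2\pi/(-g''(t^*))}$ and collecting terms (with the $e^L$ from the Gaussian prefactor and the $e^{-3L/4}$ from $g(t^*)$ combining into $e^{L/4}$) reproduces the claimed leading factor $\sqrt{2\pi}\,L^{n+1}\,e^{L/4-n/L+3/4}/\sqrt{(1+L)n - \tfrac34 L^2}$.

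For the full asymptotic expansion and the explicit value of $b_1$, I would substitute $t = t^* + x/\sqrt{-g''(t^*)}$ to normalize the quadratic part, Taylor-expand $g(t) - g(t^*)$ in $x$, exponentiate the cubic and higher corrections, and integrate term by term against $e^{-x^2/2}\,dx$ using the Gaussian moment formula.  Each power $x^k/(-g''(t^*))^{k/2}$ contributes a power of $1/n$ via the size estimates $g^{(k)}(t^*)$ is of order $n/(L^{k-1} e^{kL})$ for $k\ge 2$, so the resulting expansion coefficients are rational functions of $L$, giving $b_k \in \Q(L)$.  The explicit formula for $b_1$ comes from the standard Laplace expression involving $g'''(t^*)^2$ and $g''''(t^*)$; plugging in the elementary formulas for these derivatives (immediate from $g(t) = n\log\log t - \tfrac34\log t - \pi t$) together with the saddle relation yields, after routine algebra, the stated value $\tfrac{2L^4 + 9L^3 + 16L^2 + 6L + 2}{24(L+1)^3}$.

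The main technical obstacle is the $n$-dependence of the saddle $L = L(n)$: every derivative of $g$ at $t^*$ is itself a function of $n$ through $L$, and the rescaling variable $x$ lives on a window whose width shrinks at a specific rate with $n$.  I would need to verify that the tail outside this window can be discarded uniformly in $n$, and that the interchange of the formal Taylor expansion with integration is justified to all orders; this is a routine but careful application of Laplace's method with a moving saddle, controlled by the concavity of $g$ near $t^*$ together with the exponential decay of $e^{-\pi t}$ far from it.
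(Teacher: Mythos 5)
Your proposal is correct and follows essentially the same route as the paper: the paper likewise replaces $\theta_0(t)$ by $e^{-\pi t}$ (justifying this by the super-polynomially small relative error near the saddle), locates the saddle $t=e^{L}$ from $n=L(\pi e^{L}+\tfrac34)$, and carries out the Laplace expansion, only parametrized multiplicatively as $t=(1+\lambda)e^{L}$ rather than by your additive rescaling. The one small slip is the heuristic size estimate $g^{(k)}(t^*)\asymp n/(L^{k-1}e^{kL})$, whose correct leading order is $(k-1)!\,n/(L\,e^{kL})$; this does not affect the conclusion that the corrections form a series in $1/n$ with coefficients in $\Q(L)$, with $b_1$ determined by the standard terms in $g'''(t^*)^2$ and $g''''(t^*)$.
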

\begin{Example*}
\label{AsympFnEx}
Here we illustrate Theorem~\ref{AsympFn}. The two-term approximation 
 $$  F(n) \;\approx\; \sqrt{2\pi}\;\frac{L^{n+1}}{\sqrt{(1+L)n-\frac3{4}L^2}}\;e^{L/4-n/L+3/4}
   \,\Bigl(1 + \frac{b_1}n \Bigr)
   =:\widehat F(n) $$
   is sufficiently strong for the proof of Theorem~\ref{XiTheorem}. In particular, Theorem~\ref{AsympFn} and \eqref{gamma_formula} imply
   \begin{equation}\label{gammaHatnDefn}
   \widehat\gamma(n):=\frac{n!}{(2n)!}2^{6-2n}\binom {2n}2\widehat F(2n-2) =\gamma(n)
   \left(1+O\left(\frac{1}{n^{2-\varepsilon}}\right)\right).
   \end{equation}
   Here are some approximations $\widehat\gamma(n)$ obtained from this expression by numerically computing $L$ using its defining equation above. 
This table illustrates the high precision of this formula.
   
\medskip

\begin{center}
\newcolumntype{d}[1]{D{.}{.}{#1}}
\begin{tabular}{|r|rd{0}l|ld{0}l|rll|}
\hline \rule[3.5mm]{0mm}{2mm} \rule[-2mm]{0mm}{2mm}
$n$
	&&& \hspace{-0mm} $\widehat{\gamma}(n)$
	&&& \hspace{-0mm}$\gamma(n)$
	&&& \hspace{-32mm}$\gamma(n)/\widehat\gamma(n)$ \\   \hline 
\rule[3mm]{0mm}{2mm} 
$10$ 
	&  \hspace{1mm}$\approx$ & 1.6313374394 & \hspace{18mm}$\times 10^{-17}$ 
	&  \hspace{1mm}$\approx$ & 1.6323380490 & \hspace{18mm}$\times 10^{-17}$ 
	&  \hspace{1mm}$\approx$ &  $1.000613367$& \\
$100$
	&  \hspace{1mm}$\approx$ &  6.5776471904 & \hspace{18mm}$\times 10^{-205}$ 
	&  \hspace{1mm}$\approx$ &  6.5777263785 & \hspace{18mm}$\times 10^{-205} $ 
	&  \hspace{1mm}$\approx$ &  $1.000012038$& \\
$1000$
	&  \hspace{1mm}$\approx$ &  3.8760333086 & \hspace{18mm}$\times 10^{-2567}$
	&  \hspace{1mm}$\approx$ &  3.8760340890 & \hspace{18mm}$\times 10^{-2567}$ 
	&  \hspace{1mm}$\approx$ &  $1.000000201$&\\
$10000$
	&  \hspace{1mm}$\approx$ &  3.5219798669 & \hspace{18mm}$\times 10^{-32265}$ 
	&  \hspace{1mm}$\approx$ &  3.5219798773 & \hspace{18mm}$ \times 10^{-32265}$ 
	&  \hspace{1mm}$\approx$ &  $1.000000002$&\\
 $100000$
 	&  \hspace{1mm}$\approx$ &  6.3953905598 & \hspace{18mm}$ \times 10^{-397097}$ 
 	&  \hspace{1mm}$\approx$ &  6.3953905601 & \hspace{18mm}$\times 10^{-397097}$
 	&  \hspace{1mm}$\approx$ &  $1.000000000$&\\
\hline
\end{tabular}

\end{center}
\end{Example*}
\begin{proof}[Proof of Theorem \ref{AsympFn}]
We approximate the integrand in \eqref{FnDefn} by $f(t)=(\log t)^nt^{-3/4}e^{-\pi t}$
(from now on we consider~$n$ as fixed and omit it from the notations). We have 
$t\frac d{dt}\log f(t)=\frac n{\log t}-\pi t-\frac34$, so $f(t)$ assumes its unique maximum at $t=a$, 
where $a=e^L$ is the solution in~$(1,\infty)$ of $$n=\Big(\pi a+\frac34\Big)\log a\,.$$ 
We can then apply the usual saddle point method. The Taylor expansion of $f(t)$ around~$t=a$ is given by
 $$  \frac{f((1+\la)a\bigr)}{f(a)} = \Bigl(1+\frac{\log(1+\la)}{\log a}\Bigr)^n(1+\la)^{-3/4}e^{-\pi\la a}
  = e^{-C\la^2/2}\Bigl(1+A_3\la^3+A_4\la^4+\cdots\Bigr)\,,  $$
where $C=(\e+\e^2)n-\frac34$  (here we have set $\e=\frac1{\log a}=L^{-1}$) and the $A_i$ ($i\ge3$) are polynomials 
of degree $\lfloor i/3\rfloor$ in~$n$ with coefficients in~$\Bbb Q[\e]$. This expansion is found by expanding  ${\log (f((1+\la)a\bigr))-\log (f(a))}$ in $\lambda$. The linear term vanishes by the choice of $a$, the quadratic term is $-C\lambda^2/2$, and the coefficients of the higher powers of $\lambda$ are all linear expressions in $n$ with coefficients in $\Bbb Q[\e]$. Exponentiating this expansion gives the claimed expression  for ${f((1+\la)a\bigr)}/{f(a)}$, where the dominant term of each $A_i$ is governed primarily  by the exponential of the cubic term of the logarithmic expansion. The first few $A_i$ are 
\begin{align*} & A_3=\Bigl(\frac\e3+\frac{\e^2}2+\frac{\e^3}3\Bigr)\,n \m\frac14\,, 
  \qquad A_4=-\Bigl(\frac\e4+\frac{11\e^2}{24}+\frac{\e^3}2+\frac{\e^4}4\Bigr)\,n + \frac3{16}\,, \\
 & A_5=\Bigl(\frac\e5+\frac{5\e^2}{12}+\frac{7\e^3}{12}+\frac{\e^4}2+\frac{\e^5}5\Bigr)\,n\m\frac3{20}\,, \\
 & A_6=\Bigl(\frac{\e^2}{18}+\frac{\e^3}6+\frac{17\e^4}{72}+\frac{\e^5}6+\frac{\e^6}{18}\Bigr)\,n^2
   \m\Bigl(\frac\e4+\frac{91\e^2}{180}+\frac{17\e^3}{24}+\frac{17\e^4}{24}+\frac{\e^5}2+\frac{\e^6}6\Bigr)\,n
   +\frac5{32}\,.
\end{align*}
Plugging in $t={(1+\la) a}$ immediately gives the asymptotic expansion
\begin{align*}
 \int_1^\infty f(t)\,dt 
   & = a\,f(a)\,\int_{-1+1/a}^\infty e^{-C\la^2/2}\Bigl(1+A_3\la^3+A_4\la^4+\cdots\Bigr)\,d\la \\
   & = a\,f(a)\,\sqrt{\frac{2\pi}C}\;
  \Bigl(1+\frac{3A_4}{C^2}+\frac{15A_6}{C^3}+\cdots+\frac{(2i-1)!!A_{2i}}{C^i}+\cdots\Bigr)\;.
  \end{align*}
(Here only the part of the integral with $C\la^2<B\log n$, where $B$ is any function of~$n$ going
to infinity as $n$ does, contributes.)
This equality and the expression in Theorem \ref{AsympFn} are interpreted as asymptotic expansions. Although these series themselves may not converge for a fixed $n$, 
we may truncate the resulting approximation at $O(n^{-A})$ for some $A>0$, and as  $n\rightarrow+\infty$ this approximation becomes true to the specified precision. 
 Substituting into this expansion the formulas for $C$ and~$A_i$ 
in terms of~$n$ we obtain the statement of the theorem with $F(n)$ replaced by the integral over~$f(t)$, 
with only $A_{2i}$ ($i\le3k$) contributing to~$b_k$.  But then the same asymptotic formula holds also
for $F(n)$, since the ratio $f(t)/\tz(t)=1+e^{-3\pi t}+\cdots$ is equal to $1+\text O(n^{-K})$ for any~$K>0$ 
for $t$ near~$a$.  
\end{proof}

\section{Proof of Theorems~\ref{XiTheorem} and \ref{effective}}\label{XiProofSection}

\subsection{Proof of Theorem~\ref{XiTheorem}} \  {For each $d\geq 1$,} we employ Theorem~\ref{GeneralTheorem2} with sequences $\{A(n)\}$ and $\{\delta(n)\}$ for which
\begin{equation}\label{NEED}
{\log\Bigl(\frac{\gamma(n+j)}{\gamma(n)}\Bigr) \, \= \, A(n)j
-j^2\delta(n)^2 +\sum_{i=3}^{d} g_i(n)j^i+ \text{\rm o}\bigl({\delta(n)^d}\bigr)}
\end{equation}
for all $0\leq j\leq {d},$ where $g_i(n)={\text {\rm o}}\bigl({\delta(n)^i}\bigr)$. 
Stirling's formula, (\ref{gamma_formula}),
and  \eqref{gammaHatnDefn} gives
\begin{equation}\label{GammaExpression}
\gamma(n)=
\frac{e^{n-2}n^{n+\frac12}(1+\frac{1}{12n}) L^{\tn}}{2^{\tn-3}{\tn}^{\tn+\frac12}(1+\frac{1}{12\tn})}
 \sqrt{\frac{2\pi}{K}}
\cdot \exp\left(\frac{L}{4} -\frac{\tn}{L}+\frac34 \right) \left(1+\frac{b_1(\tn)}{\tn}\right) \left(1+O\left(\frac{1}{n^{2-\varepsilon}}\right)\right),
\end{equation}
where $\tn:=2n-2$, $L:=L(\tn)$, and $K:=K(\tn):=\left(L(\tn)^{-1}+L(\tn)^{-2}\right)\tn-3/4$.
The $L(\tn)$ are values of a non-vanishing holomorphic function for $\Re (n)>1$, and so for $|j|<n-1$ we have the Taylor expansion
\begin{equation*}
\mathcal L(j;n):=\frac{L(\tn+2j)}{L(\tn)}=1+\sum_{m\geq 1}\ell_m(n)\frac{j^m}{m!}.
\end{equation*}
If $J=\lambda (n-1)$ with $-1<\lambda<1$, then the asymptotic  $L(n)\approx \log(\frac{n}{\log n})$ implies
$$
\lim_{n\to+\infty} \mathcal L(J,n) =\lim_{n\to+\infty} \frac{L\big(\tn(\lambda+1)\big)}{L(\tn)}=1.
$$
  In particular, 
we have $\ell_1(n)
=\frac{2}{K\cdot L^2}$ and $\ell_2(n)= \frac{-8(\tn-3/4L)(1+L/2)}{K^3\cdot L^5}$ and ${\ell_m(n)=o\big(\frac{1}{(n-1)^m}\big)}.$
By a similar argument applied to
 \begin{equation*}
\mathcal K(j;n):=\frac{K(\tn+2j)}{K(\tn)}=1+\sum_{m\geq 1}k_m(n) \frac{j^m}{m!} \ \ \ \
{\text {\rm and}}\ \ \ 
\mathcal B(j;n):=\frac{1+\frac{b_1(\tn+2j)}{\tn+2j}}{1+\frac{b_1(\tn)}{\tn}}=1+\sum_{m\geq 1}\beta_m(n)\frac{j^m}{m!},
\end{equation*}
 we find that $\beta_m(n)=o\big(\frac{1}{(n-1)^{m+1}}\big)$,
 $k_1(n)=\frac{2(L+1)}{K\cdot L^2}-\frac{2\tn (L+2)}{K^2L^4},$
 and $k_m(n)=o\big(\frac{1}{(n-1)^m}\big)$ for $m\geq 2$.

Let $R(j;n)$ be the approximation for $\gamma(n+j)/\gamma(n)$ obtained from (\ref{GammaExpression}).
We then expand $\log R(j;n)=:\sum_{m\geq 1}g_m(n)j^m$, with the idea that we will choose $A(n)\sim g_1(n)$ and $\delta(n)\sim \sqrt{-g_2(n)}$.
To this end, if $J=\lambda(n-1)$ for $-1<\lambda<1$, then a calculation reveals that
\begin{equation}\label{GmLimit}
-(1+\lambda)\log(1+\lambda)=\lim_{n\to +\infty}\frac{\log R(J;n){-J\log\left(\frac{nL^2}{4\tn^2}\right)-J}}{n-1}.
\end{equation}
Therefore, $g_m(n)=O\big((n-1)^{1-m}\big)$, and
algebraic manipulations give 
\begin{align*}
g_1(n)&=\log\left(\frac{nL^2}{4\tn^2}\right) +\tn \ell_1(n)\frac{L+1}{L}-\frac{2}{L} +\frac{\ell_1(n)\cdot L}{4}-\frac{k_1(n)}{2}+O\left(\frac{1}{n^{2-\varepsilon}}\right), \\
g_2(n)&=-\frac{1}{\tn}+\big(4\ell_1(n) +\tn \ell_2(n)\big)\frac{L+1}{2L}  -\tn \ell_1(n)^2\frac{L+2}{2L}+O\left(\frac{1}{n^{2-\varepsilon}}\right).
\end{align*}
Using the
 formulas for $\ell_1(n),$ $\ell_2(n),$ and $k_1(n)$ above, we  define
\begin{align}\label{g1n}
\delta(n):=\sqrt{\frac{1}{\tn}-\frac{2}{L^2\cdot K}} \ \ \ \ \ 
{\text {\rm and}}\ \ \ \ \ A(n):=\log \Big(\frac{nL^2}{4\tn^2}\Big) +\frac{L-1}{ L^2\cdot K}+\frac{\tn (L+2)}{L^4\cdot K^2}.
\end{align}
The bounds for the $g_m(n)$ and the asymptotics above imply {the $\rm{o}(1)$ error term in} (\ref{NEED}), and also that  for sufficiently large $n$ we have  $0<\delta(n)\to 0.$ Therefore, Theorem~\ref{GeneralTheorem2} applies, and its
corollary  gives Theorem~\ref{XiTheorem}.

\subsection{Sketch of the Proof of Theorem~\ref{effective}}

Let $A(n)$ and $\delta(n)$ be as in (\ref{g1n}). 
If we let
$$
\widehat{J}_{\gamma}^{d,n}(X):= \frac{\delta(n)^{-d}}{\gamma(n)}\cdot J_{\gamma}^{d,n}\Bigl(\frac{\delta(n)\,X\m1}{\exp(A(n))}\Bigr)=\sum_{k=0}^d \beta^{d,n}_k X^k,
$$
then Theorem~\ref{XiTheorem} implies that $\lim_{n\rightarrow +\infty}\widehat{J}_{\gamma}^{d,n}(X)= H_d(X)=:
 \sum_{k=0}^d h_k X^k$.
We have confirmed the hyperbolicity of  the $\widehat{J}_{\gamma}^{d,n}(X)$ for  $n\leq 10^6$ and $4\leq d\leq \TBD$ using
Hermite's  criterion (see Theorem C of \cite{DL}).  

Using this criterion, we also
chose
vectors
$\varepsilon_d:=(\varepsilon_d(d), \varepsilon_d(d-1),\dots, \varepsilon_d(0))$  of positive numbers
and signs $s_d, s_{d-1},\dots, s_0\in \{\pm 1\}$
for which $\widehat{J}_{\gamma}^{d,n}(X)$ is hyperbolic if
$0\leq s_k(\beta^{d,n}_k - h_k) < \varepsilon_d(k)$ for all $k$.
To make use of these inequalities, for positive integers $n$ and
$1\leq j\leq \TBD$, define real numbers $C(n,j)$ by
\begin{equation}\label{Cbounds}
\frac{\gamma(n+j)}{\gamma(n)e^{A(n)j}}\cdot e^{\delta(n)^2j^2}=1+\frac{C(n,j)}{n^{3/2}}.
\end{equation}
Using an effective form of (\ref{GammaExpression}),
it can be shown\footnote{It turns out that $\delta(6)$ is not real.}  that $0<C(n,j)<14.25$ for all $n\geq 7$ and $1\leq j \leq \TBD$.
Finally, we  determined numbers $M_{\varepsilon_d}$ 
for which the required inequalities hold for
$n\geq M_{\varepsilon_d}$. The proof follows from the fact that we found suitable choices for which $M_{\varepsilon_d}<10^6$.

\begin{example}
We illustrate the case of $d=4$ using
$\varepsilon_4:=(0.041, 1.384, 0.813, 7.313, 0.804).$
For $n\geq 100$ the odd degree coefficients satisfy $$0<\beta^{4,n}_3< 28~\delta(n)\ \ \
{\text {\rm  and}}\ \ \  -145.70
\delta(n)<\beta^{4,n}_1< 0,
$$
 while the even degree coefficients satisfy
\begin{align*}
1-16.05~\delta(n)^2<\beta^{4,n}_4< 1,& 
\ \ -12< \beta^{4,n}_2<-12+ 16.20~\delta(n),
\ \  12- 16.01~\delta(n)< \beta^{4,n}_0< 12.&
\end{align*}
It turns out that  $M_{\varepsilon_4}:=104<10^6$.
\end{example}


\section{Examples}\label{examples}
 For convenience, we let the
$\widehat{J}_{\alpha}^{d,n}(X)$ denote the polynomials which converge to $H_d(X)$ in (\ref{tag2}).
We now illustrate Theorem~\ref{MFCase} with
(\ref{PartitionGenFunction}),  where $m=1/24$ and $k=-1/2$.  Using (\ref{mftag}), we may choose
$A(n)=\frac{2\pi}{ \sqrt{24n-1}}-\frac{24}{24n-1}$ and 
$\delta(n)=\sqrt{\frac{12 \pi}{ (24n-1)^{3/2}} -\frac{288}{(24n-1)^2}   }.
$ Although the one-term approximations of (\ref{mftag}) given at the end of Section~\ref{MFCaseProof} also satisfy Theorem~\ref{GeneralTheorem2}, the two-term approximations converge more quickly and better illustrate the result.
 With this data we observe indeed that the
 degree $2$ and $3$ partition Jensen polynomials are modeled by
 $H_2(X)=X^2-2$ and $H_3(X)=X^3-6X$.

\smallskip
\begin{center}

\begin{tabular}{|l|ll|ll|}
\hline \rule[-3mm]{0mm}{8mm}
$n$       && $\widehat{J}_{p}^{2,n}(X)$           && $\widehat{J}_{p}^{3, n}(X)$ \\   \hline 
$ 100$ &&  $\approx 0.9993X^2+0.0731X-1.9568$ && $\approx 0.9981X^3+0.2072X^2-5.9270X+1.1420$\\
$ 200$ &&  $\approx 0.9997X^2+0.0459X-1.9902$ && $\approx 0.9993X^3+0.1284X^2-5.9262X-1.4818$\\
$ 300$ &&  $\approx 0.9998X^2+0.0346X-1.9935$ && $\approx 0.9996X^3+0.0965X^2-5.9497X-1.3790$\\
$ 400$ &&  $\approx 0.9999X^2+0.0282X-1.9951$ && $\approx 0.9998X^3+0.0786X^2-5.9621X-1.2747$\\
\hspace{3mm}\vdots && \hspace{25mm}\vdots &&\hspace{30mm}\vdots \\
$ 10^{8}$ &&  $\approx 0.9999X^2+0.0000X-1.9999$ && $\approx0.9999X^3+0.0000X^2-5.9999X-0.0529$\\
\hline
\end{tabular}
\end{center}

\medskip
\noindent
{The next table illustrates Theorem \ref{XiTheorem} for the Riemann zeta function
using (\ref{g1n}) in the case of degrees $2$ and $3$.}

\medskip
\begin{center}
\begin{tabular}{|l|ll|ll|}
\hline \rule[-3mm]{0mm}{8mm}
$n$
	&& $\ \ \ \ \ \ \ \ \ \ \ \ \ \  \widehat{J}_{\gamma}^{2, n}(X)$
	&& $\ \ \ \ \ \ \ \ \ \ \ \ \ \ \ \ \ \ \ \ \widehat{J}_{\gamma}^{3,n}(X)$  \\
\hline 
$100$
	&&  $\approx 0.9896X^2 + 0.3083X - 2.0199$
	&&  $\approx 0.9769X^3 + 0.7570X^2 - 5.8690X - 1.2661$ \\
$200$
	&&  $\approx 0.9943X^2 + 0.2271X - 2.0061$
	&&  $\approx 0.9872X^3 + 0.5625X^2 - 5.9153X - 0.9159$ \\
$300$
	&&  $\approx 0.9960X^2 + 0.1894X - 2.0029$
	&&  $\approx 0.9911X^3 + 0.4705X^2 - 5.9374X - 0.7580$ \\
$400$
	&&  $\approx 0.9969X^2 + 0.1663X - 2.0016$
	&&  $\approx 0.9931X^3 + 0.4136X^2 - 5.9501X - 0.6623$ \\      
\hspace{2.5mm}\vdots \  && \hspace{25mm}\vdots &&\hspace{40mm}\vdots \\
$10^8$
	&&  $\approx 0.9999X^2+0.0003X-2.0000$
	&&  $\approx 0.9999X^3+0.0009X^2-5.9999X - 0.0014$\\
\hline
\end{tabular}

\end{center}

\medskip
\noindent
Finally, we conclude with data for the degree 6 renormalized Jensen polynomials
$J_{\gamma}^{6,n}(X)$ which converge to $H_6(X)=X^6-30X^4+180X^2-120$.

\medskip
\begin{center}
\begin{tabular}{|l|ll|}
\hline \rule[-3mm]{0mm}{8mm}
$n$
	&&\ \ \ \ \ \ \ \ \ \ \ \ \ \ \ \ \ \ \ \ \ \ \ \ \ \ \ \ \ \ \ \ \ \ \ \  \ \ \ \ \ \ \ \ \ \ $\widehat{J}_{\gamma}^{6, n}(X)$
	 \\
\hline 
$100$
	&&  $\approx 0.912X^6+3.086X^5-24.114X^4-55.652X^3+133.109X^2+151.696X-85.419$\\
$200$
	&&  $\approx 0.950X^6+2.374X^5-26.625X^4-42.824X^3+153.246X^2+115.849X-100.510$\\

$300$
	&&  $\approx 0.965X^6+2.011X^5-27.608X^4-36.282X^3+161.084X^2+97.843X~-106.295$\\
$400$
	&&  $\approx 0.973X^6+1.780X^5-28.139X^4-32.111X^3+165.303X^2+86.428X~-109.388$\\     
\ \ \vdots && \hspace{50mm}\vdots \   \hspace{50mm}\vdots \\
$10^{10}$
	&&  $\approx 0.999X^6+0.000X^5-29.999X^4-0.008X^3~+179.999X^2~+0.020X~-119.999$\\
\hline
\end{tabular}

\end{center}

\end{document}